\newtheorem{dfn}{Definition}[section]
\newtheorem{thm}[dfn]{Theorem}
\newtheorem{lem}[dfn]{Lemma}
\newtheorem{cor}[dfn]{Corollary}
\newtheorem{prop}[dfn]{Proposition}\makeatletter
\newcommand{\dis}{\displaystyle}
\newcommand{\ve}{\varepsilon}
\begin{document}
\title{Nonnegative solutions to stochastic heat equation with nonlinear drift}
\author{Makoto Nakashima \footnote{nakamako@math.tsukuba.ac.jp, Division of Mathematics, Graduate School of Pure and Applied Sciences,University of Tsukuba, 1-1-1 Ten-noudai, Tsukuba-shi, Ibaraki-ken, Japan } }
\date{}
\maketitle

\begin{abstract}
In this paper, we consider one-dimensional stochastic heat equation with nonlinear drift, 
$\dis \partial_t u=\frac{1}{2}\Delta u+b(u)u+\sigma(u)\dot{W}(t,x)$, where $b:\mathbb{R}_{+}\to \mathbb{R}$ is a continuous function and $\sigma:\mathbb{R}_{+}\to \mathbb{R}$ is a continuous function with suitable property.
We will construct nonnegative solutions to such SPDEs.

\end{abstract}

\vspace{1em}
We denote by  $(\Omega, {\cal F},P )$ a probability space. Let $\mathbb{N}=\{0,1,2,\cdots\}$, $\mathbb{N}^*=\{1,2,3,\cdots\}$, and $\mathbb{Z}=\{0,\pm 1,\pm 2,\cdots\}$. Let $C(S)$ be the set of continuous functions on $S$.

\section{Introduction and Main Result}

We consider the nonnegative solutions $u(t,x)$ with $t\geq 0$, $x\in\mathbb{R}$, to the stochastic heat equation\begin{align}
\partial _tu&=\frac{1}{2}\Delta u+a(u)+\sigma(u)\dot{W}\notag\\
u(0,x)&=u_0(x),\label{SPDE}\tag{SPDE$_{a\sigma}$}
\end{align}  
where $\dot{W}=\dot{W}(t,x)$ is $2$-parameter white noise. This type of stochastic heat equations appears in several models related to physics or population genetics. 

\vspace{1em}
{\bf Example 1}  If $a(x,u)=a(x)u$, $a(x)$ is a bounded continuous function in $x$ and  $\sigma(u)=\sqrt{u}$, then a solution to (\ref{SPDE}) corresponds to the density $u(t,x)dx=X_t(dx)$, where $X_t$ is the one-dimensional super-Brownian motion\cite{KonShi,Rei}. 

{\bf Example 2} If $a(u)=p(1-u)+qu+ru(1-u)$ for $p,q\geq 0$ and $r\in \mathbb{R}$, $\sigma(u)=\sqrt{u(1-u)}$ and $u_0\in [0,1]$, then the solution to (\ref{SPDE}) corresponds to  the density for the scaling limit  of the stepping-stone model\cite{Shi2}.

{\bf Example 3} If $a(u)=\theta u-u^2$ for $\theta\geq 0$ and $\sigma(u)=\sqrt{u}$, then a  solution to (\ref{SPDE}) arises as the density of the limit of the long-range contact process and voter model\cite{MueTri}.

{\bf  Example 4} If $a(u)\equiv 0$ and $\sigma(u)=u$, then the solution to (\ref{SPDE}) is the Cole-Hopf solution to KPZ equation arising at the statistical mechanics \cite{BerGia}. It is known that the solution is pathwise unique\cite{Wal}.

{\bf Example 5} If $a(u)\equiv 0$ and $\sigma(u)=\sqrt{u+u^2}$, then a solution to (\ref{SPDE}) appears as a density of a weak  limit process of some branching random walks in random environment\cite{Nak}. Its ``dual process" is also a solution to (\ref{SPDE}) for $a(u)=-\frac{1}{2}u^2$ and  $\sigma(u)=u$.

{\bf Remark:} The uniqueness in law has been already known for the above examples under some initial condition.

Also, the existence of the solutions to such SPDE has been studied well. Iwata showed the existence and the uniqueness in law of the case where $a(u)$ and $\sigma(u)$ are global Lipschitz
continuous, or $\sigma(u)$ is bounded and $a(u)$ grows at most polynomially with some condition\cite{Iwa}.  Mueller and Perkins showed the existence for the case where  $a(u)=0$ and $\sigma(u)$ is a general continuous function with some growth condition, and showed compact support property of the solutions\cite{MuePer}. 

In this paper, we will prove the existence of nonnegative solutions with local Lipschitz continuity on $a(u)$ with some condition and without boundedness of $\sigma(u)$.

\vspace{1em}

To state our main theorem, we introduce some notations.

In this paper, we suppose that $a(u)=b(u)u$ for some continuous function $b:\mathbb{R}_+\to \mathbb{R}$.

Let $C_b^n(\mathbb{R})$, $C_c^{n}(\mathbb{R})$ and $C_0^n(\mathbb{R})$ be the set of $n$-th continuously differentiable  functions with bounded, compact support and vanishing at infinity, respectively. Also, the subscript `$+$' means the subset of the nonnegative elements. 

\begin{dfn}({\bf Rapidly decreasing functions})
For $C(\mathbb{R})$, let \begin{align*}
|f|_p=\sup_x\left|	e^{p|x|}f(x)		\right|,\ \ p\in\mathbb{R}.
\end{align*}
We introduce subspace $C_{rap}(\mathbb{R})$ of $C(\mathbb{R})$ by\begin{align*}
C_{rap}(\mathbb{R})=\left\{	f\in C(\mathbb{R}): |f|_\lambda<\infty\ \ \text{for every }\lambda>0		\right\}.
\end{align*}

\end{dfn}


Let $\theta>0$, $r\in\left(0,1\right]$ and $L_b,l_b,L_\sigma\geq 0$.

\begin{thm}\label{main}
Let $b\in C({\mathbb{R}_+})$ and $\sigma \in C(\mathbb{R}_+)_+$ satisfies \begin{align}
-l_b(u^\theta+1 )&\leq b(u)\leq L_b\label{b}\\
0&\leq \sigma(u)\leq L_\sigma(u^r+u) .\label{sigma}
\end{align}
Then, for any $u_0\in C_{rap}(\mathbb{R})_+$, there are solutions to the following martingale problem:\begin{align}
\begin{cases}
\text{For all }\phi \in C_b^2(\mathbb{R})\\
Z_t(\phi)=\int \phi(x)u(t,x)dx-\int \phi(x)u_0(x)dx\\
\hspace{5em}-\int_0^t\int (\frac{1}{2}\Delta +b(u(s,x)))\phi(x)u(s,x)dxds	\\
\text{is an }{\cal F}_t^u\text{-martingale with }\\
\langle		Z(\phi)	\rangle_t=\int_0^t	\int \sigma(u(s,x))^2\phi(x)^2dxds.	
\end{cases}\label{martiag}
\end{align}
Especially, $t\mapsto u(t,\cdot)$ is the continuous map from $\mathbb{R}_+\to C_{rap}^+(\mathbb{R})$.
\end{thm}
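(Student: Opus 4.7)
The plan is to construct a solution by truncating the coefficients, solving the truncated SPDEs via known existence results, and then passing to the limit. Define Lipschitz cut-offs $b_n(u):=b(u\wedge n)$ and $\sigma_n(u):=\sigma(u\wedge n)$, mollified if necessary so as to be globally Lipschitz. By Iwata's theorem, for each $n$ the corresponding truncated martingale problem admits a unique $C_{rap}(\mathbb{R})$-valued solution $u_n$ starting from $u_0$. Nonnegativity of $u_n$ is enforced either by modifying the approximations to vanish on $(-\infty,0]$ or by a Tanaka-type argument on $(u_n)^-$ exploiting $\sigma_n(0)=0$ and $b_n(u)u\to 0$ as $u\downarrow 0$.

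The core of the proof is establishing uniform-in-$n$ moment estimates. Since $r\in(0,1]$ gives $u^r\le 1+u$ for $u\ge 0$, the coefficient $\sigma$ has at most linear growth, while $b(u)u$ is bounded above by $L_b u$ and below by the strongly dissipative term $-l_b(u^{\theta+1}+u)$. Working with the mild formulation
\begin{align*}
u_n(t,x)=(p_t\ast u_0)(x)+\int_0^t\!\!\int p_{t-s}(x-y)b_n(u_n)u_n\,dy\,ds+\int_0^t\!\!\int p_{t-s}(x-y)\sigma_n(u_n)\,W(dy,ds),
\end{align*}
Burkholder's inequality combined with weighted heat-kernel estimates would yield pointwise moment bounds of the form $\sup_n\sup_{t\le T}\E[u_n(t,x)^p]\le C_{p,\lambda,T}\,e^{-\lambda|x|}$ for every $p\ge 1$, $\lambda>0$, and $T<\infty$. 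Space-time H\"older control of $u_n$ in the weighted norms defining $C_{rap}$ then follows by standard iteration of the mild form.

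By Kolmogorov--Centsov these estimates yield tightness of the laws of $(u_n)$ in $C(\mathbb{R}_+,C_{rap}(\mathbb{R}))$. Prokhorov and the Skorokhod representation allow passage, along a subsequence, to an almost-sure limit $u\in C(\mathbb{R}_+,C_{rap}(\mathbb{R}))$. The continuity of $b$ and $\sigma$, together with the uniform integrability provided by the moment bounds, permits convergence of the drift term $\int_0^t\!\int b(u_{n_k})u_{n_k}\phi\,dx\,ds\to\int_0^t\!\int b(u)u\phi\,dx\,ds$ and of the quadratic variation $\int_0^t\!\int\sigma(u_{n_k})^2\phi^2\,dx\,ds$. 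Standard martingale-problem arguments then identify $u$ as a solution of (\ref{martiag}), and continuity of $t\mapsto u(t,\cdot)$ in $C_{rap}$ is inherited from the limiting process.

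The main obstacle is closing the moment estimate. Because the lower bound on $b$ permits $b(u)u$ to be as negative as $-l_bu^{\theta+1}$, which is super-linear when $\theta$ is large, a naive polynomial Gronwall argument fails. The rescue is that this super-linear drift is \emph{dissipative} on nonnegative solutions: the term $-l_b u_n^{\theta+1}$ forces $u_n$ downward wherever $u_n$ is large and, for $p$ chosen large enough, dominates the $O(u_n^p)$ contribution coming from the It\^o correction $\tfrac{p(p-1)}{2}\sigma_n(u_n)^2u_n^{p-2}$. A secondary technical point is that the exponential weights characterising $C_{rap}$ are not integrable, so their propagation by the heat semigroup relies on the standard $\lambda$-twisted heat-kernel estimate $\int p_t(x-y)e^{\lambda|y|}\,dy\le Ce^{\lambda|x|+\lambda^2 t/2}$; this is what ultimately justifies the mild-form moment bounds above.
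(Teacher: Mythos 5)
Your route---Lipschitz truncation/mollification of $b$ and $\sigma$, existence via Iwata, then tightness and a limit---is genuinely different from the paper's, which never Lipschitz-approximates: it freezes the coefficients on time intervals of length $\delta_n=1/n$ and pastes together $(B,b(u_{k\delta_n}),\gamma(u_{k\delta_n}))$-super-Brownian motions with $\gamma(u)=\sigma^2(u)/u$, so nonnegativity and the $C_{rap}$-valued density come for free from Konno--Shiga/Shiga, and moments are controlled by domination with the Feynman--Kac semigroup $P^{L_b}_t$. Your scheme would also avoid the paper's case split $r>1/2$ versus $r\le 1/2$ (needed there only because $\gamma$ must be continuous at $0$), which is attractive. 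However, two of your steps have real gaps.

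First, nonnegativity of the approximations is not a side remark: every later step (in particular discarding the negative part of the drift) uses $u_n\ge 0$, and $b,\sigma$ are only defined on $\mathbb{R}_+$, so you must extend them. Mollifying $\sigma(u\wedge n)$ to make it globally Lipschitz will in general destroy $\sigma_n(0)=0$; note the non-Lipschitz point of $\sigma$ is typically $u=0$ (e.g.\ $\sigma(u)\sim u^r$), which truncation at level $n$ does not cure, and without $\sigma_n(0)=0$ and a drift that is $\ge 0$ at $u=0$ the comparison/Tanaka argument fails. This is repairable (e.g.\ piecewise-linear approximations vanishing at $0$, converging uniformly on compacts), but it must be done, not assumed. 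Second, and more seriously, the step you yourself call the core is argued by a mechanism that does not apply: you claim the dissipative term $-l_bu^{\theta+1}$ must dominate the It\^o correction for large $p$. But $l_b$ may be $0$ (the hypotheses allow $b\ge 0$), so there may be no dissipative term at all; and the correction $\sigma_n(u)^2u^{p-2}\lesssim u^{p-2+2r}+u^{p}=O(1+u^{p})$ is already of Gronwall type, so no domination is needed or available in general. What actually closes the estimate---and what the paper does---is that for a \emph{nonnegative} solution $b(u)u\le L_bu$, so the drift's negative part is simply dropped under the positive heat kernel (equivalently one dominates by $P^{L_b}_t$), after which a convolution Gronwall lemma (Mueller--Perkins, Lemma 4.1) plus a localization by the stopping times $T(\ell)$ closes the recursion. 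Moreover the polynomial lower bound $b\ge -l_b(u^{\theta}+1)$ is genuinely needed later, in the H\"older/tightness estimates, where $|b(u_n)|u_n$ enters in absolute value and requires all-order moments of $u_n^{\theta+1}$; your sketch never uses $\theta$ there. As written, the decisive moment bound rests on an inapplicable argument, so the proposal has a genuine gap, although the overall strategy could be repaired along the lines just indicated.
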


{\bf Remark:} Solutions to the martingale problem of (\ref{martiag}) are solutions to (\ref{SPDE})

{\bf Remark:} If $b(u)\vee 0$ is unbounded and solutions exist, then solutions may blow up in finite time. Actually, $b(u)=u^\alpha$ for $\alpha>0$ with boundary condition $u(t,0)=u(t,R)=0$ for  $t\geq 0$, $R>0$, then the solution blow up\cite{FilKoh,Fuj,LeeNi}.

\section{Preliminary}

Let $B_t$ be the one dimensional Brownian motion and $P_x$ be the law of $B$ starting at $x$. 
Also, we denote by $P_{s,x}$ the law of $Y_t=(t,B_t)$ for $0\leq s\leq t$, $x\in\mathbb{R}$. 
Let ${\cal E}_b$ be the set of the bounded measurable functions on $\mathbb{R}_+\times \mathbb{R}$.

For $\phi\in {\cal E}_b$, we define the semigroup $P_t$ associated to $(t,B_t)$ by \begin{align*}
P_t\phi(s,x)=P_{s,x}\left(	\phi(t,B_t)		\right).
\end{align*}
Also, we denote by $A$ the generator of $Y$, that is for $\phi\in C_b(\mathbb{R}_+\times \mathbb{R})$\begin{align*}
A\phi(s,x)=\lim_{h\searrow 0}\frac{P_{s,x}\left(		\phi(Y_{s+h})		\right)-\phi(s,x)}{h},\ \ \text{if the limit exists,}
\intertext{and }
{\cal D}(A)=\left\{	\phi\in {\cal E}_b:	\lim_{h\searrow 0}\frac{P_{s,x}\left(		\phi(Y_{s+h})		\right)-\phi(s,x)}{h}\text{ exists.}		\right\}
\end{align*}
In  particular, $\phi(t,s)=\phi_1(t)\phi_2(x)$ for $\phi_1\in C_b^1(\mathbb{R})$, and $\phi_2\in C_b^2(\mathbb{R})$, \begin{align*}
A\phi(t,x)=\left(\frac{\partial }{\partial t}+\frac{1}{2}\Delta\right) \phi
\end{align*}

Let ${\cal M}_F(\mathbb{R})$ be the space of finite measures on $\mathbb{R}$ with the topology of weak convergence.

Let $b\in C_b(\mathbb{R})$ and $\gamma\in C_b(\mathbb{R})_+$. Then, for $t\in [0,\infty)$ and $m\in {\cal M}_F(\mathbb{R})$, there exists Dawson-Watanabe superprocess characterized by the unique solution to the following martingale problem:\begin{align*}
\begin{cases}
\text{For any }\phi\in {\cal D}(A),\\
\quad Z_t(\phi)=X_t(\phi)-m(\phi)-\int_0^t	\left(A+b\right)\phi (s,x)X_s(dx)ds	\\
\text{is an ${\cal F}_t^X$-martingale and }\\
\quad \langle	Z(\phi)	\rangle_t=\int_0^t\gamma(x)\phi(s,x)^2X_s(dx)ds	
\end{cases}
\end{align*} 
We denote it by ${X}$ and its law by ${\mathbb{P}}_{m,b,\gamma}$. Especially, we call it $(B,b,\gamma)$-super-Brownian motion. We remark that ${X}$ takes continuous ${\cal M}_F(\mathbb{R})$-valued paths.
We define \begin{align*}
&\Omega_X=C\left(	[0,\infty),{\cal M}_F(\mathbb{R})\right)
\end{align*}
with its Borel $\sigma$-field ${\cal F}^X$. 
Let \begin{align*}
{\cal F}^X[s,t]=\sigma\left(X_r:s\leq r\leq t\right)
\intertext{and}
{\cal F}^X[s,t+]=\bigcap_{n=1}^\infty {\cal F}^X\left[s,t+\frac{1}{n}\right].
\end{align*}
Especially, we write ${\cal F}_t^X={\cal F}^X[0,t]$.

For our convenience, we will identify the element $u$ of $L^1_+(\mathbb{R})$ with the element of ${\cal M}_F(\mathbb{R})$ by \begin{align*}
\int_B u(x)dx,\ \ \text{for }B\in {\cal B}(\mathbb{R}).
\end{align*}

\section{Proof}
In this section, we will construct a solution of the martingale problem (\ref{martiag}). 

At the moment, we assume that \begin{align*}
r\in\left(\frac{1}{2},1			\right]\end{align*} and we define $\gamma:\mathbb{R}_+\to \mathbb{R}$ by\begin{align*}
\gamma(u)=\frac{\sigma^2(u)}{u}1\{u>0\}.
\end{align*}
We remark that $\gamma (u)$ is continuous in $u\in [0,\infty)$ and $0\leq \gamma(u)\leq C(\gamma)(1+u)$.

For $X\in \Omega_X$, we define \begin{align*}
u(t,x)=\begin{cases}
\lim_{n\to \infty}2^nX_t( I_n(x)),\ \ &\text{if the limit exists},\\
0&\text{otherwise},
\end{cases}
\end{align*}
where $I_n(x)=[j2^{-n},(j+1)2^{-n})\ni x$, $j\in\mathbb{Z}$. By \cite[Theorem 1.4]{KonShi} and \cite[Theorem 2.5]{Shi}, if we assume $b\in C_b(\mathbb{R})$, $\gamma \in C_b(\mathbb{R})_+$ and $m$ has rapidly decreasing continuous density $u_0$, then $u(t,x)$ exists $\mathbb{P}_{m, b,\gamma}$-a.s. and also $u(t,\cdot)\in C_{rap}(\mathbb{R})_+$.
Now, we construct new probability measures $\mathbb{P}^n$ on $(\Omega_X,{\cal F}^X,{\cal F}^X_{t})$ by induction as  follows:

 Let $\delta_n=\frac{1}{n}$ for each $n\in \mathbb{N}$. 
The restriction of $\mathbb{P}^n$ to ${\cal F}^X[0,\delta_n]$ is given by $\mathbb{P}_{u_0,b(u_0),\gamma(u_0)}$,
where we remark that $b(u_0)\in C_b(\mathbb{R})$ and $\gamma(u_0)\in C_b(\mathbb{R})_+$ since $u_0\in C_{rap}(\mathbb{R})_+$. 
That is, $X_t$ is the super-Brownian motion $(B,b(u_0),\gamma(u_0))$ up to time $t=\delta_n$.

Given $\{u(t,x):(t,x)\in[0,\delta_n]\times \mathbb{R}\}$, we suppose that the restriction of $\mathbb{P}^n$ to ${\cal F}^X[\delta_n,2\delta_n]$ is given by $\dis \mathbb{P}_{u_{\delta_n},b(u_{\delta_n}),\gamma(u_{\delta_n})}$. That is given $\{u(\delta_n,x):x\in \mathbb{R}\}$, $X_{t+\delta_n}$ evolves as the super-Brownian motion $(B,b(u_{\delta_n}),\gamma(u_{\delta_n}))$ for $0\leq t\leq \delta_n$ with staring $u_{\delta_n}(x)dx$, where we write $u(t,x)=u_t(x)$. 

Inductively, we can construct $\mathbb{P}^n$ on $(\Omega_X,{\cal F}^X,{\cal F}_t^X)$. In particular, $t\mapsto u(t,\cdot)$ is a continuous $C_{rap}(\mathbb{R})_+$-valued process under $\mathbb{P}^n$.

Let $u^n(t,x)=u([\frac{t}{\delta_n}]\delta_n,x)$. Then, for all $\phi\in {\cal D}(A)$, \begin{align*}
Z_t(\phi)&=X_t(\phi_t)-\int \phi_0(x)u_0(x)dx\\
&\hspace{2em}-\int_0^t\int\left(A \phi(s,y)+b(u^n(s,y)\right)\phi(s,y))X_s(dy)ds
\intertext{is a ${\cal F}^X_{t+}$-martingale under $\mathbb{P}^n$ and }
\langle	Z(\phi)		\rangle_t&=\int_0^t\int		\gamma(u^n(s,y))\phi^2(s,y)X_s(dy)ds,\ \mathbb{P}^n\text{-a.s..}
\end{align*}

We define the Feynman-Kac semigroup associated the Brownian motion $B$ and ${g}\in C_b({\mathbb{R}})$ by \begin{align*}
P_{t}^{{g}}\phi(s,y)=E_{s,y}\left[	\exp\left(	\int_s^t	{g}(B_r)dr	\right)	\phi(t,B_t)				\right],\ \ 0\leq s\leq t,\ \phi\in {\cal E}_b(A).
\end{align*}
Then, we have that for $\phi\in {\cal D}_b(A)$\begin{align*}
&A P_{t}^{{g}}\phi(s,y)\\
&=\lim_{h\searrow 0}\frac{1}{h}\left(E_{s,y}\left[	E_{s+h,B_{s+h}}	\left[		\exp\left(	\int_{s+h}^{t}	{g}(B_r)dr	\right)	\phi(t,B_{t})					\right]	\right]\right.\\
&\left. \hspace{10em}-E_{s,y}\left[		\exp\left(	\int_s^t	{g}(B_r)dr	\right)	\phi(t,B_t)						\right]\right)\\
&=-g(y)P_{t}^{{g}}	\left(\phi		\right)(s,y).
\end{align*}

\begin{lem}\label{semigroup}
For all $t\geq 0$ and $\phi\in {\cal D}_b(A)$, \begin{align*}
\mathbb{P}^n(|X_t(\phi)|)\leq \int P_{t}^{L_b}|\phi|(0,y)u_0(y)dy.
\end{align*}
\end{lem}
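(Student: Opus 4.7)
The strategy is to feed the martingale problem from the preceding display a time-dependent test function that exactly absorbs the worst-case drift produced by $b$. Set
\begin{align*}
\Psi(s,y) := P_t^{L_b}|\phi|(s,y), \qquad 0 \leq s \leq t.
\end{align*}
By the Feynman--Kac computation already carried out just above (applied with $g \equiv L_b$), $\Psi$ is nonnegative and satisfies $A\Psi + L_b\,\Psi \equiv 0$ on $[0,t)\times\mathbb{R}$, with terminal value $\Psi(t,y)=|\phi|(y)$ and initial value $\Psi(0,y)=P_t^{L_b}|\phi|(0,y)$.

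Plugging $\Psi$ into the martingale problem and taking $\mathbb{P}^n$-expectation at time $t$, the mean-zero property of $Z_t(\Psi)$ yields
\begin{align*}
\mathbb{E}^n\bigl[X_t(|\phi|)\bigr] = \int \Psi(0,y)\, u_0(y)\,dy + \mathbb{E}^n\!\left[\int_0^t\!\!\int \bigl(A\Psi(s,y) + b(u^n(s,y))\,\Psi(s,y)\bigr) X_s(dy)\,ds\right].
\end{align*}
Substituting $A\Psi = -L_b\,\Psi$, the last term becomes $\mathbb{E}^n\!\left[\int_0^t\!\!\int (b(u^n(s,y))-L_b)\,\Psi(s,y)\,X_s(dy)\,ds\right]$, which is $\leq 0$ because of hypothesis \eqref{b}, the nonnegativity of $\Psi$, and the nonnegativity of $X_s$. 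Combining this with $|X_t(\phi)| \leq X_t(|\phi|)$ gives exactly the claimed bound.

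The main technical obstacle is admissibility of $\Psi$ as a test function. Even if $\phi \in \mathcal{D}_b(A)$, the function $|\phi|$ is not differentiable at its zero set, so $\Psi$ is only smooth on the half-open strip $[0,t)\times\mathbb{R}$ through the Brownian regularization, with a boundary singularity in the generator as $s\uparrow t$. I would handle this either by mollifying $|\phi|$ into a sequence $\psi_k\in\mathcal{D}_b(A)$ with $\psi_k\downarrow|\phi|$ (applying the argument to each $\psi_k$ and passing to the limit by monotone convergence, using the $\mathbb{P}^n$-integrability of $X_t(\mathbf{1})$), or by running the martingale problem up to time $t-\varepsilon$ with terminal condition $\Psi_\varepsilon(s,y)=P_{t-\varepsilon}^{L_b}|\phi|(s,y)$ and letting $\varepsilon\to 0$ using path-continuity of $s\mapsto X_s$ in $\mathcal{M}_F(\mathbb{R})$.
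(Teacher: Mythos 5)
Your proposal is correct and is essentially the paper's own argument: the paper likewise feeds the Feynman--Kac test function $P_t^{L_b}\phi$ into the martingale problem, uses $A P_t^{L_b}\phi = -L_b P_t^{L_b}\phi$ together with $b(u^n)\leq L_b$ and nonnegativity of $X_s$ to discard the drift term, and takes expectations so the martingale term vanishes. The only organizational differences are that the paper first treats $\phi\geq 0$ and then concludes via $|X_t(\phi_t)|\leq X_t(|\phi_t|)$, and that it runs the estimate inductively over the blocks $[k\delta_n,(k+1)\delta_n]$ using the Markov property of the piecewise construction of $\mathbb{P}^n$, rather than invoking the global-in-time martingale property in a single step as you do.
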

\begin{proof}First, we let $\phi\geq 0$ for all $(t,x)\in\mathbb{R}_+\times \mathbb{R}$.
Since $b(u^n)\leq L_b$ $\mathbb{P}^n$-a.s., we have that \begin{align*}
X_t(\phi_t)&=\int P_{t}^{L_b}\phi(0,y)u_0(y)dy+\int_{0}^t		\int(b(u_0(y))-L_b)P_{t}^{L_b}\phi(r,y)X_r(dy)	dr	\\
&+Z_t(P_{t}^{L_b}\phi)\\
&\leq \int P_{t}^{L_b}\phi(0,y)u_0(y)dy+Z(P_{t}^{L_b}\phi),
\end{align*}
for $t\in[0,\delta_n]$ $\mathbb{P}^n$-a.s.\,and the statement follows up to $t=\delta_n$ since $\mathbb{P}^n\left(Z\left(	P_{t}^{L_b}(\phi)	\right)\right)=0$. By definition of $\mathbb{P}^n$, we have that \begin{align*}
X_t(\phi_t)&=\int P_{t}^{L_b}\phi(\delta_n,y)u^n(\delta_n,y)dy+Z_t(P_{t}^{L_b}\phi)-Z_{\delta_n}(P_{t}^{L_b}\phi)\\
&+\int_{\delta_n}^t		\int(b(u^n(\delta_n,y))-L_b)P_{t}^{L_b}\phi(r,y)X_r(dy)	dr	\\
&\leq \int P_{t}^{L_b}\phi(\delta_n,y)u^n(\delta_n,y)dy+Z_t(P_{t}^{L_b}\phi)-Z_{\delta_n}(P_{t}^{L_b}\phi)
\end{align*}
for $t\in [0,2\delta_n]$ and 	by the Markov property, $\dis \mathbb{P}^n\left(	\left.Z_t(P_{t}^{L_b}\phi)-Z_{\delta_n}(P_{t}^{L_b}\phi)\right|{\cal F}_{\delta_n}^X		\right)=0$ $\mathbb{P}^n$-a.s..		Hence, $\dis \mathbb{P}^n(X_t(\phi_t))\leq \int P_{t}^{L_b}\phi(\delta_n,y)\mathbb{P}^n(u(\delta_n,y))dy\leq \int P_{t}^{L_b}\phi(0,y)u_0(y)dy.$ 
By inductively, the statement follows for all $t\in[0,\infty)$. Also, since $|X_t(\phi_t)|\leq X_t(|\phi_t|)$, the statement holds for $\phi\in {\cal D}_b(A)$.
\end{proof}

For $\lambda>0$, $t>0$, $q>0$, let \begin{align*}
\nu_n(\lambda,q,t)=\sup_{s\leq t}\int e^{\lambda |x|}\mathbb{P}^n\left(		u(s,x)^q		\right)dx.
\end{align*}

\begin{lem}\label{martindesc}\begin{enumerate}[(i)]
\item For $\phi\in {\cal D}_{b}(A)$, we have that\begin{align*}
\mathbb{P}^n\left(	\langle		Z(\phi)		\rangle_t		\right)<\infty.
\end{align*}
\item 
$\sup_n\nu_n(\lambda,q,t)<\infty$ for any $q>0.$
\end{enumerate}\end{lem}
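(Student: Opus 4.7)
Part (i) reduces to (ii) with $q=2$. Indeed, using $\gamma(u)\le C(\gamma)(1+u)$ and boundedness of $\phi$,
\begin{align*}
\mathbb{P}^n(\langle Z(\phi)\rangle_t)\le C\|\phi\|_\infty^2\int_0^t\mathbb{P}^n\Bigl(X_s(1)+\int u^n(s,y)u(s,y)\,dy\Bigr)ds.
\end{align*}
The first term is bounded by Lemma \ref{semigroup} applied with $\phi\equiv 1$ and the integrability of $u_0\in C_{rap}(\mathbb{R})_+$. For the second, split the time integral over the intervals $[k\delta_n,(k+1)\delta_n]$, on which $u^n(s,\cdot)=u(k\delta_n,\cdot)$; conditioning on ${\cal F}^X_{k\delta_n}$ and applying the conditional analogue of Lemma \ref{semigroup} (valid since the conditional drift is still bounded above by $L_b$) gives $\mathbb{P}^n(u(k\delta_n+r,y)\mid{\cal F}^X_{k\delta_n})\le e^{L_br}(p_r*u(k\delta_n))(y)$. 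Young's convolution inequality then dominates the result by $e^{L_b\delta_n}\mathbb{P}^n(\|u(k\delta_n)\|_2^2)\le\nu_n(\lambda,2,k\delta_n)$.

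For (ii), the plan is induction on the interval index $k$. On each $[k\delta_n,(k+1)\delta_n]$, conditionally on ${\cal F}^X_{k\delta_n}$ the process $X$ is a Dawson--Watanabe superprocess with drift $b(u(k\delta_n,\cdot))\le L_b$ and branching rate $\gamma(u(k\delta_n,\cdot))\le C(1+u(k\delta_n,\cdot))$, so its density admits the mild SPDE representation
\begin{align*}
u(k\delta_n+t,x)=p_tu(k\delta_n)(x)+\int_0^t p_{t-s}*\bigl(b(u(k\delta_n))\,u(k\delta_n+s,\cdot)\bigr)(x)\,ds+M(t,x),
\end{align*}
with $\langle M(\cdot,x)\rangle_t=\int_0^t\int p_{t-s}^2(x-y)\gamma(u(k\delta_n,y))u(k\delta_n+s,y)\,dy\,ds$. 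Applying Burkholder--Davis--Gundy to $M$, the Jensen bound $(p_t*f)^q\le p_t*f^q$ for $q\ge 1$ to the first two terms, and the weight-transfer estimate $\int e^{\lambda|x|}p_t(x-y)\,dx\le C(\lambda,t)e^{\lambda|y|}$ should produce a one-step inequality of the form
\begin{align*}
\int e^{\lambda|x|}\mathbb{P}^n(u(k\delta_n+t,x)^q)\,dx\le (1+c\delta_n)\int e^{\lambda|y|}\mathbb{P}^n(u(k\delta_n,y)^q)\,dy+c\delta_n+R_k,
\end{align*}
where the remainder $R_k$ involves only moments of order strictly smaller than $q$. A discrete Gronwall argument over $k\le t/\delta_n$ then yields $\sup_n\nu_n(\lambda,q,t)\le C_T$. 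The case $q\in(0,1]$ is handled separately: Jensen's inequality gives $\mathbb{P}^n(u^q)\le\mathbb{P}^n(u)^q$, and a H\"older argument with a small upward shift of the weight $\lambda$ reduces matters to $q=1$, which is immediate from Lemma \ref{semigroup} together with the Gaussian computation $\int e^{\lambda|x|}p_t(x-y)\,dx\le C(\lambda,t)e^{\lambda|y|}$ and $u_0\in C_{rap}(\mathbb{R})_+$.

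The main obstacle is the polynomial growth in $|b(v)|\le L_b+l_b(v^\theta+1)$: taking the $q$-th power of the drift term in the mild formula brings in products $u(k\delta_n,\cdot)^{q\theta}u(\cdot)^q$, i.e.\ moments of order strictly larger than $q$, and BDG applied to $M$ has an analogous effect through the factor $\gamma(u(k\delta_n,\cdot))\le C(1+u(k\delta_n,\cdot))$ in $\langle M\rangle$. I would close the bootstrap by running the induction on $k$ in lockstep with an induction on $q$ along a sequence $1,2,1+\theta,2+2\theta,\dots$, inserting already-established bounds on lower-order moments through H\"older's inequality at each stage so that $R_k$ is uniformly controlled. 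Uniformity in $n$ follows because each one-step multiplicative correction is $1+O(\delta_n)$, so the accumulated Gronwall factor over the $\lfloor t/\delta_n\rfloor$ steps stays bounded by $e^{CT}$ independently of $n$.
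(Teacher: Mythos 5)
Your treatment of (i) is essentially the paper's: split time at the grid points, condition on ${\cal F}^X_{k\delta_n}$, use the conditional analogue of Lemma \ref{semigroup} and $\gamma(u)\leq C(1+u)$, so that finiteness of the bracket at stage $k+1$ follows from the first and second moments $\nu_n(\lambda,1,k\delta_n)$, $\nu_n(\lambda,2,k\delta_n)$; the paper runs exactly this interleaved induction (bracket finiteness $\Rightarrow$ extension of $Z$ to an orthogonal martingale measure $\Rightarrow$ moment bounds, then repeat).

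For (ii), however, there is a genuine gap, and you have put your finger on it yourself without resolving it. Keeping the drift term $b(u(k\delta_n,\cdot))u$ in the mild representation and raising it to the $q$-th power produces terms of the type $u(k\delta_n,\cdot)^{q\theta}u(\cdot)^q$, i.e.\ moments of order \emph{strictly larger} than $q$ (roughly $q(1+\theta)$ after H\"older). Your proposed remedy --- a lockstep induction on $q$ along $1,2,1+\theta,2+2\theta,\dots$ ``inserting already-established bounds on lower-order moments'' --- cannot close: the problematic terms are of higher, not lower, order, so each step of the bootstrap requires moments that have not yet been (and a priori cannot be) controlled; the claim that the remainder $R_k$ involves only moments of order strictly smaller than $q$ is false precisely because $b$ is only bounded above, with $-l_b(u^\theta+1)\leq b(u)\leq L_b$. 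The paper's key idea, which your argument misses, is that the lower bound on $b$ never needs to enter the moment estimates at all: writing the representation with the Feynman--Kac kernel $e^{L_b(t-s)}\phi_{t-s}$ (i.e.\ $P^{L_b}$), the drift correction carries the factor $(b(u^n(s,y))-L_b)\leq 0$ against the nonnegative density $u$, so it is simply \emph{dropped} from the upper bound, leaving $u(t,x_0)1\{t\leq T(\ell)\}\leq P_t^{L_b}u_0(x_0)+\int_0^{t\wedge T(\ell)}\int e^{L_b(t-s)}\phi_{t-s}(y)\,dZ(s,y)$. Only the martingale term remains; BDG together with $\gamma(u)\leq C(1+u)$ then yields moments of orders $q$ and $q/2$ only, giving the integral inequality $\nu_n(\lambda,q,t,\ell)\leq c\bigl(|u_0^q|_{2\lambda}\lambda^{-1}+\int_0^t(t-s)^{-1/2}(\nu_n(\lambda,q,s,\ell)+\nu_n(\lambda,q/2,s,\ell))\,ds\bigr)$, which is closed by the singular Gronwall lemma (Lemma \ref{hol}) and induction over $q=2^m$, the localization $T(\ell)$ guaranteeing the a priori boundedness that Lemma \ref{hol} requires, and $\ell\to\infty$ at the end. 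Two further points where your sketch is loose: the mild representation against the Gaussian kernel is only legitimate after $Z$ has been extended to an orthogonal martingale measure (your Step for (i) must precede, and the kernel must be approximated by $\phi_{t-s+\ve}$ with $\ve\searrow 0$, as the paper does), and a plain discrete Gronwall over the $\lfloor t/\delta_n\rfloor$ intervals does not handle the $(t-s)^{-1/2}$ singularity from $\phi_{t-s}^2$ --- this is exactly what Lemma \ref{hol} is for. Your reduction of $0<q\leq 1$ to $q=1$ via Jensen and a shift of the exponential weight is fine and matches what the paper leaves implicit.
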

\begin{proof}We will divide the proof into several steps as follows.
\begin{enumerate}[Step (1-$k$)]
\item \label{step1}For each $k\delta_n$ ($k\in \mathbb{N}$), $\mathbb{P}^n(\langle Z(\phi)\rangle_{k\delta_n})<\infty$.
\item \label{step2}We will extend $Z$ to an orthogonal martingale measure $\{Z(\phi):\phi\in {\cal E}_b\}$ (see \cite{Wal}) up to time $t=k\delta_n$.
\item \label{step3}$\sup_{n}\nu_n(\lambda,q,k\delta_n)<\infty.$
\end{enumerate}
We will prove the above statements by induction; step $(1$-$1)\Rightarrow$ step $(2$-$1)\Rightarrow$ step $(3$-$1)\Rightarrow $ step $(1$-$2)\Rightarrow\cdots$.

\textit{Step \ref{step1}}

It follows by the property of super-Brownian motion that
\begin{align*}
\mathbb{P}^n(\langle	Z(\phi)		\rangle_{\delta_n})&=\int \left(\gamma(u_0(y))\phi(s,y)^2\right)\mathbb{P}^n(u(s,y))dy\\
&=C(\phi)\int_0^{\delta_n}\mathbb{P}^n\left(X_s\left(\gamma(u_0(\cdot))\right)\right)ds\\
&\leq C(\phi)\int_0^{\delta_n}\int		P_{s}^{L_b}\gamma(u_0)(y)u_0(y)dyds	\\
&\leq C(\gamma,\phi)e^{L_b\delta_n}\int_0^{\delta_n}\int \left(1+P_{s}u_0(y)\right)u_0(y)dyds\\
&\leq C(\gamma,\phi)e^{L_b\delta_n}\left(\int_0^{\delta_n}\int \left(u_0(y)+u_0(y)^2\right)dyds 		\right)
\end{align*}
where we have used Lemma \ref{semigroup} in the third line.
Also, if $\nu_n(\lambda,q,k\delta_n)<\infty$, then we have by the Markov property that \begin{align*}
&\mathbb{P}^n\left(\left.\langle		Z(\phi)	\rangle_{(k+1)\delta_n}-\langle Z(\phi)\rangle_{k\delta_n}\right|		{\cal F}^X_{k\delta_n}	\right)\\
&=\int_{k\delta_n}^{(k+1)\delta_n}		\gamma (u(k\delta_n,y))\phi(s,y)^2 \mathbb{P}^n\left(\left.	u(s,y)\right|		u(k\delta_n,\cdot)	\right)dy		ds						\\
&\leq C(\phi)\int_{k\delta_n}^{(k+1)\delta_n}\int		\gamma(u^n(k\delta_n,y))\mathbb{P}_{u_{k\delta_n},b(u_{k\delta_n}),\gamma(u_{k\delta_n})}(u(s-k\delta_n,y))dyds\\
&\leq C(\gamma, \phi)e^{L_b\delta_n}\int_{k\delta_n}^{(k+1)\delta_n}\int		(1+P_{s}u^n(k\delta_n,y))u^n(k\delta_n,y)dyds\ \\
&\leq C(\gamma,\phi)e^{L_b\delta_n}\int_{k\delta_n}^{(k+1)\delta_n}\int \left(u^n(k\delta_n,y)+u^n(k\delta_n,y)^2\right)dyds,\ \ \mathbb{P}^n\text{-a.s., }
\end{align*}
where we have used the same argument of Proof of Lemma \ref{semigroup} in the forth line.
By taking expectation with assumption, we can obtain that $\mathbb{P}^n(\langle	Z(\phi)	\rangle_{(k+1)\delta_n})<\infty$.

\textit{Step \ref{step2}}

We assume that step (\ref{step1}-$k$).
We will show that we can extend $Z_t(\phi)$ be an orthogonal martingale on ${\mathbb{R}}$ up to $t=k\delta_n$. $\phi_m\in {\cal D}_b(A)$ and $\phi\in {\cal E}_b$ satisfies that  $\phi_n\to \phi$ pointwise boundedly, then \begin{align*}
\mathbb{P}^n\left(		\langle	Z(\phi_m)-Z(\phi_\ell)		\rangle_t		\right)&=\mathbb{P}^n\left(		\int_0^t\int		\gamma(u^n(s,y))(\phi_m(s,y)-\phi_\ell(s,y))^2u(s,y)dyds	\right)\\
&\to0,\ \ \text{as }m,\ell\to \infty
\end{align*} 
for $t\leq k\delta_n$ by the dominated convergence theorem. Thus, $Z_t(\phi_m)$ converges to a continuous square integrable martingale uniformly in $t$ on compacts in $L^2$ and we can extend $Z$ to an orthogonal martingale measure $\{Z(\phi):\phi\in {\cal E}_b\}$ such that\begin{align*}
\langle		Z(\phi)	\rangle_t=\int_0^t\int \gamma(u^n(s,y))\phi(s,y)^2u(s,y)dyds,\  \ \ \text{for }0\leq t\leq k\delta_n\ \text{and }\phi\in{\cal E}_b.
\end{align*}
Also, if $\phi:\Omega_X\times [0,k\delta_n]\times \mathbb{R}\to \mathbb{R}$ is ${\cal F}^X_{t+}\times {\cal B}(\mathbb{R}_+\times \mathbb{R})$-predictable and \begin{align*}
\mathbb{P}^n\left(		\int_0^t\int		\gamma(u^n(s,y))\phi(\omega,s,y)^2u(s,y)dyds		\right)<\infty,\ \ \text{for }{0\leq t\leq k\delta_n},
\end{align*}
then we can extend $Z$ to a stochastic integral of the form \begin{align*}
Z_t(\phi)=\int_0^t\phi(\omega,r,y)dZ(r,y).
\end{align*}

\textit{Step \ref{step3}}

The statement is true for the case $q=1$ by Lemma \ref{semigroup} for any $t>0$.  Indeed, let $\{\phi_m(x):m\in\mathbb{N}\}$ be the ${\cal D}_{b}(A)_+$-valued non-decreasing sequence such that $\lim_{m\to \infty }\phi_m(x)=e^{\lambda|x|}$ pointwisely.			Then, by Fubini's theorem and by monotone convergence theorem, we have that  
\begin{align*}
\mathbb{P}^n\left(\int	e^{\lambda|\cdot|}u(t,x)	dx	\right)&=\lim_{\ell\to \infty}		\mathbb{P}^n\left(\int\phi_\ell(x)u(t,x)dx\right)\\
&= \lim_{\ell\to\infty}\mathbb{P}^n\left(	X_t(\phi_\ell)		\right)\\
&\leq \limsup_{\ell\to \infty}\int P_{t}^{L_b}|\phi_\ell|(y)u_0(y)dy\\
&=\int P_{t}^{L_b}(e^{\lambda|\cdot|})(y)u_0(y)dy.
\end{align*}
By Lemma \ref{semigroup},
\begin{align*}
\nu_n(\lambda,1,t)&\leq \sup_{s\leq t}\int P_{s}^{L_b}(e^{\lambda |\cdot|})(y)u_0(y)dy\\
&\leq \sup_{s\leq t}e^{L_bs}\int P_{s}(e^{\lambda |\cdot|})(y)u_0(y)dy\\
&\leq c(t,\lambda)\int e^{\lambda |y|}u_0(y)dy,
\end{align*}
where we have used Lemma 6.2 in \cite{Shi} in the last line.
Thus, $\sup_n\nu_n(\lambda,1,t)<\infty$.

For $x_0\in \mathbb{R}$, let $\phi_s(x)=\frac{1}{\sqrt{2\pi s}}\exp\left(-\frac{(x-x_0)^2}{2s}\right)$ and for $\lambda'>\lambda\geq 0$, we define\begin{align*}
T(\ell)=\inf\left\{t\geq 0:|u_t|_{\lambda'}=\sup_{x}e^{\lambda' |x|}|u(t,x)|>\ell\right\}.
\end{align*}

We assume that step (\ref{step2}-$k$).
Then, we have for $0\leq s \leq  t\leq k\delta_n$  and $\ve>0 $ that    by taking $\psi(s,x)=e^{L_b(t-s)}\phi_{t+\ve-s}(x)$ \begin{align*}
&1\{T(\ell)\geq t\}\int \phi_\ve(x)u(t,x)dx\\
&\hspace{3em}+1\{T(\ell)<t\}\int		e^{L_b(t-T(\ell))}\phi_{t-T(\ell)+\ve}(y)u(T(\ell),y)dy\\
&= \int e^{L_bt}\phi_{t+\ve}(y)u_0(y)dy+ \int_0^{t\wedge T(\ell)} \int (b(u^n(s,y))-L_b)e^{L_b(t-s)} \phi_{t-s+\ve}(y)u(s,y)dyds\\
&\hspace{1em}+\int_0^{t\wedge T(\ell)}\int e^{L_b(t-s)}\phi_{t-s+\ve}(y)dZ(s,y).
\end{align*}
It is clear that each term converges $\mathbb{P}^n$-a.s.\,except  the last term as $\ve\searrow 0$. Therefore, \begin{align*}
&1\{T(\ell)\geq t\}u(t,x_0)\\
&\hspace{3em}+1\{T(\ell)<t\}\int		e^{L_b(t-T(\ell))}\phi_{t-T(\ell)}(y)u(T(\ell),y)dy\\
&= P_{t}^{L_b}(u_0)(x_0)+ \int_0^{t\wedge T(\ell)} \int (b(u^n(s,y))-L_b)e^{L_b(t-s)} \phi_{t-s}(y)u(s,y)dyds\\
&\hspace{1em}+\lim_{\ve\to 0}\int_0^{t\wedge T(\ell)}\int e^{L_b(t-s)} \phi_{t-s+\ve}(y)dZ(s,y),\ \ \mathbb{P}^n\text{-a.s.}
\end{align*}
Also,\begin{align*}
&\mathbb{P}^n\left(	\left(\int_0^{t\wedge T(\ell)}\int	e^{L_b(t-s)}\left(	\phi_{t-s+\ve}(y)-\phi_{t-s}(y)	\right)dZ(s,y)	\right)^2		\right)\\
&=\mathbb{P}^n\left(	\int_0^{t\wedge T(\ell)}\int	e^{2L_b(t-s)}\left(	\phi_{t-s+\ve}(y)-\phi_{t-s}(y)	\right)^2\gamma(u^n(s,y))u(s,y)dyds			\right)\\
&\leq c(\gamma)\mathbb{P}^n\left(		\int_0^{t\wedge T(\ell)}\int			e^{2L_b(t-s)}\left(	\phi_{t-s+\ve}(y)-\phi_{t-s}(y)	\right)^2	\left(u^n(s,y)+1\right))u(s,y)dyds		\right)\\
&\leq c(\gamma)\ell(\ell+1)\int_0^t\int e^{2L_b(t-s)}\left(	\phi_{t-s+\ve}(y)-\phi_{t-s}(y)	\right)^2dyds\to 0,\ \text{as }\ve\to0
\end{align*}
by the dominated convergence theorem.
Thus, we have that \begin{align*}
&1\{T(\ell)\geq t\}u(t,x_0)+1\{T(\ell)<t\}	P_{t-T(\ell)}^{L_b}u(T(\ell),\cdot)(x_0)\\
&= P_{t}^{L_b}u_0(x_0)+ \int_0^{t\wedge T(\ell)} \int (b(u^n(s,y))-L_b)e^{L_b(t-s)}\phi_{t-s}(y)u(s,y)dyds\\
&\hspace{1em}+\int_0^{t\wedge T(\ell)}\int e^{L_b(t-s)} \phi_{t-s}(y)dZ(s,y),\ \ \mathbb{P}^n\text{-a.s.}
\end{align*}
Especially, we have that \begin{align*}
u(t,x_0)1\{t\leq T(\ell)\}\leq  P_{t}^{L_b}u_0(x_0)+\int_0^{t\wedge T(\ell)}\int e^{L_b(t-s)} \phi_{t-s}(y)dZ(s,y),\ \ \mathbb{P}^n\text{-a.s.}
\end{align*}

Thus, by the Burkholder-Davis-Gundy inequality, \begin{align*}
&\mathbb{P}^n\left(	u(t,x_0)^q1\{t\leq T(\ell)\}		\right)\\
&\leq c(q)\left.\left(P_{t}^{L_b}u_0(x_0)\right)^q\right.\\
&\left.+c(q)\mathbb{P}^n\left(	\left(	\int_0^t\int		1\{t\leq T(\ell)\}e^{2L_b(t-s)}\phi_{t-s}(y)^2\gamma(u^n(s,y))u(s,y)dyds			\right)^{\frac{q}{2}}\right)				\right.\\
&\leq c(q,\gamma)\left(P_{t}^{L_b}u_0(x_0)\right)^q\\
&\left.+c(q,\gamma)\mathbb{P}^n\left(	\left(	\int_0^t\int		1\{t\leq T(\ell)\}e^{2L_b(t-s)}\phi_{t-s}(y)^2(u^n(s,y)+1)u(s,y)dyds			\right)^{\frac{q}{2}}\right)				\right.\\
&\leq c(q,\gamma)\left.\left(P_{t}^{L_b}u_0(x_0)\right)^q\right.\\
&\hspace{0em}+c(q,r)\left.	\left(	\int_0^t		\int\phi_{t-s}(y)^2dyds		\right)^{q/2-1}\right.\\
&\left.\times \mathbb{P}^n	\left(	\int_0^t\int		1\{t\leq T(\ell)\}e^{L_bq(t-s)}\phi_{t-s}(y)^2\left(u(s,y)^q+u^n(s,y)^q+u(s,y)^{q/2}\right)dyds			\right)\right)		\\
&\leq c(q,\gamma)\left(P_{t}^{L_b}u_0(x_0)^q+	\left(	t^{(q-2)/4}\int_0^t\int	(t-s)^{-1/2}	\phi_{t-s}(y)\right.\right.\\
&\hspace{5em}\left.\left.\mathbb{P}^n\left(1\{t\leq T(\ell)\}e^{L_bq(t-s)}\left(u(s,y)^q+u^n(s,y)^q+u(s,y)^{q/2}\right)		\right)dyds\right)\right),
\end{align*}
where we have used the facts that $\phi_{s}(y)^2\leq Ct^{-\frac{1}{2}}\phi(y)$ and $\int_0^t\int \phi(x)^2dxds\leq Ct^{1/2}$.
Let $\nu_n(\lambda,q,s,\ell)=\sup_{u\leq s}\int e^{\lambda|x|}\mathbb{P}^n\left(		u(u,x)^q	1\{s\leq T(\ell)\}	\right)dx$. Then, we have that \begin{align*}
&\nu_{n}(\lambda,q,t,\ell)\\
&\leq c(q,\gamma)\sup_{s\leq t}\left(		e^{L_bqt}\int P_{t}(e^{\lambda|\cdot|})(x)u_0(x)^qdx		\right.\\
&\hspace{3em}+t^{(q-2)/4}\int_0^t\int (t-s)^{-1/2}P_{t-s}(e^{\lambda|\cdot|})(y)e^{L_bq(t-s)}\\
&\left.\hspace{5em}\times\mathbb{P}^n\left(1\{t\leq T(\ell)\}\left(	u^n(s,y)^q+u(s,y)^q+u(s,y)^{q/2}		\right)\right)dyds\right)\\
&\leq c(q,\lambda,t,\gamma,L_b)\\
&\times \left(		\int (e^{\lambda|x|})(x)u_0(x)^qdx		+\int_0^t (t-s)^{-1/2}\left(	\nu(\lambda,q,s,\ell)+\nu(\lambda,q/2,s,\ell)		\right)	ds\right)\\
&\leq c(q,\lambda,t,\gamma,L_b)\left(	|u_0^q|_{2\lambda}\lambda^{-1}+\int_0^t(t-s)^{-1/2}(\nu(\lambda,q,s,\ell)+\nu(\lambda,q/2,s,\ell))ds				\right)
\end{align*}
Since $\nu_n(\lambda,q,t,\ell)<\infty$ for $q>1$ by definition of $T(\ell)$ and $\sup_{n}\nu_n(\lambda,1,t,\ell)\leq \sup_n\nu_n(\lambda,1,t)<\infty$, we have by using the Lemma \ref{hol} inductively that for $q=2^m$\begin{align*}
\sup_n\nu_n(\lambda,q,t,\ell)\leq c(q,\lambda,t,\gamma,L_b,u_0)<\infty,\ 0\leq t\leq k\delta_n.
\end{align*}
Thus, letting $\ell\to\infty$, we have $\sup_n\nu_n(\lambda,q,k\delta_n)<\infty$ for $q=2^m$ and also for any $q>0$.
\end{proof}

\begin{lem}\label{hol}(\cite[Lemma 4.1]{MuePer})
Assume $g:[0,T]\to [0,\infty)$ is bounded, $f:[0,T]\to [0,\infty)$ is non-decreasing, and $g(t)\leq c(f(t)+\int_0^t (t-s)^{-1/2}g(s)ds)$, $t\leq T$. Then, $g(t)\leq f(t)\exp(4ct^{1/2})$ for $t\leq T$. 
\end{lem}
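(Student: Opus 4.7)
The plan is to turn the singular Volterra inequality into a regular one by a single self-substitution, after which the classical Gronwall lemma does the work.

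Concretely, I would first substitute the hypothesis $g(s)\le c(f(s)+\int_0^s(s-u)^{-1/2}g(u)\,du)$ into the right-hand side of the original bound to obtain
\[
g(t)\le cf(t)+c^{2}\!\int_0^t(t-s)^{-1/2}f(s)\,ds+c^{2}\!\int_0^t\!\!\int_0^s(t-s)^{-1/2}(s-u)^{-1/2}g(u)\,du\,ds.
\]
Monotonicity of $f$ gives $\int_0^t(t-s)^{-1/2}f(s)\,ds\le 2t^{1/2}f(t)$, and Fubini together with the Beta-function identity $\int_u^t(t-s)^{-1/2}(s-u)^{-1/2}\,ds=B(1/2,1/2)=\pi$ collapses the double integral to $c^{2}\pi\int_0^t g(u)\,du$. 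The crucial gain is that the kernel has become \emph{bounded}. The classical Gronwall lemma, applied to the resulting Volterra inequality $g(t)\le(c+2c^{2}t^{1/2})f(t)+c^{2}\pi\int_0^t g(u)\,du$ (and using that $f$ is non-decreasing so the prefactor is too), then yields a bound of the form $g(t)\le f(t)\exp(C(c)t^{1/2})$ with explicit $C(c)$ on $[0,T]$.

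To recover the precise constant $4c$ stated in the lemma, I would instead iterate the substitution indefinitely. By induction using the same Beta identity, the $n$-fold iterated kernel equals $c^{n}\pi^{n/2}\Gamma(n/2)^{-1}(t-s)^{n/2-1}$, so after $n$ substitutions the remainder is at most $\|g\|_\infty\,c^{n}\pi^{n/2}t^{n/2}/\Gamma(n/2+1)$, which tends to zero since $g$ is bounded. The series of the other terms is then a Mittag–Leffler type sum in $c\sqrt{\pi t}$, and a routine estimate of that series is what produces the factor $\exp(4ct^{1/2})$.

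The only real obstacle is the singularity $(t-s)^{-1/2}$ of the kernel, which precludes a direct application of Gronwall. The observation that a single self-substitution converts this singularity into the finite constant $\pi$ via the Beta function is the entire point; everything after that is standard Volterra/Gronwall machinery.
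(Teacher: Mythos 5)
The paper offers no proof of this lemma at all — it is simply quoted from \cite[Lemma 4.1]{MuePer} — so the benchmark is the standard singular-Gronwall argument, and your outline is exactly that argument: one self-substitution, the Beta identity $\int_u^t(t-s)^{-1/2}(s-u)^{-1/2}ds=\pi$, then classical Gronwall; or, for the sharper version, the full Neumann series with iterated kernels $c^n\pi^{n/2}(t-s)^{n/2-1}/\Gamma(n/2)$ and a remainder bounded by $\|g\|_\infty c^n\pi^{n/2}t^{n/2}/\Gamma(n/2+1)\to0$ because $g$ is bounded. All of those intermediate computations are correct, and the series indeed gives $g(t)\le c\,f(t)\,E_{1/2}(c\sqrt{\pi t})$ with $E_{1/2}(z)=\sum_{m\ge0}z^m/\Gamma(m/2+1)$.

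The genuine gap is the final quantitative step, in both places you invoke it. Gronwall applied to $g(t)\le(c+2c^2t^{1/2})f(t)+c^2\pi\int_0^tg(u)\,du$ yields $g(t)\le(c+2c^2t^{1/2})e^{\pi c^2t}f(t)$ — an exponent \emph{linear} in $t$, not of the form $\exp(C(c)t^{1/2})$ as you assert; and there is no ``routine estimate'' of the Mittag--Leffler sum by $\exp(4ct^{1/2})$: the even-indexed terms alone sum to $e^{z^2}$, so $E_{1/2}(c\sqrt{\pi t})\ge e^{\pi c^2t}$, which outgrows $e^{4ct^{1/2}}$ as soon as $ct^{1/2}$ is moderately large. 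Indeed no argument can close this step, because the displayed conclusion fails in this generality: take $c=1$, $f\equiv1$ and $g(t)=E_{1/2}(\sqrt{\pi t})$, which is bounded on $[0,T]$ and satisfies the hypothesis with equality (integrate the series termwise using the same Beta identity), yet $g(10)\ge e^{10\pi}>e^{4\sqrt{10}}$. So the exponent as quoted should be read as linear in $t$ (up to constants), and what your derivation actually establishes, $g(t)\le c(1+2ct^{1/2})e^{\pi c^2t}f(t)$, i.e.\ $g\le C(c,T)f$ on $[0,T]$, is precisely what is needed where the lemma is applied in the proof of Lemma \ref{martindesc}; your method is the right one, but the claim that it recovers the stated factor $\exp(4ct^{1/2})$ is unsupported, and in fact cannot be supported.
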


\begin{cor}\label{cor1}
Let $\phi\in {\cal E}_b$. 
Then, we have that \begin{align*}
\int \phi(t,y)u(t,y)dy&=\int P_{t}^{L_b}\phi(0,y)u_0(y)dy\\
&\hspace{2em}+\int_0^t\int	(b(u^n)-L_b)P_{t}^{L_b}\phi(s,y)u(s,y)dyds\\
&\hspace{4em}+\int_0^{t}\int P_{t}^{L_b}\phi(s,y)dZ(s,y),
\end{align*}
and when we write $Z_t(\phi)=\int_0^{t}\int \phi(s,y)dZ(s,y)$, its quadratic variation is given by\begin{align*}
\int_0^t		\int \gamma (u^n(s,y))\phi^2(s,y)u(s,y)dyds,\ \ \mathbb{P}^n\text{-a.s.\,for all $t< \infty$}	
\end{align*}
if it is finite.
Especially, we have that for all $(t,x)\in \mathbb{R}_+\times \mathbb{R}$, \begin{align*}
u(t,x)&=P_{t}u_0(x)+\int_0^t\int b(u^n(s,y))\phi_{t-s}^x(y)u(s,\cdot)(y)dyds\\
&+\int_0^{t}\int \phi_{t-s}^x(y)dZ(s,y), \end{align*}
\text{where $\phi_{t}^x(y)=\frac{1}{\sqrt{2\pi t}}\exp\left(-\frac{(y-x)^2}{2t}\right)$.}
\end{cor}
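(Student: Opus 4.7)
The plan is to derive the mild-form identity by inserting a carefully chosen time-dependent test function into the martingale problem for $Z$, then extend from ${\cal D}_b(A)$ to ${\cal E}_b$ by approximation. The pointwise identity will then follow by specializing to a heat-kernel approximation of the Dirac mass at $x$ and letting the regularization vanish.

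For $\phi\in{\cal D}_b(A)$, I would take the time-dependent test function $\psi(s,y) = P_t^{L_b}\phi(s,y)$. The identity $A P_t^{L_b}\phi(s,y) = -L_b P_t^{L_b}\phi(s,y)$ carried out in the excerpt, together with the terminal value $\psi(t,\cdot) = \phi(t,\cdot)$, gives upon substitution into
\begin{align*}
Z_t(\psi) = X_t(\psi(t,\cdot)) - \int \psi(0,y)u_0(y)\,dy - \int_0^t\int\bigl(A\psi + b(u^n)\psi\bigr)(s,y)u(s,y)\,dy\,ds
\end{align*}
and rearrangement, exactly the claimed identity. To pass to $\phi\in{\cal E}_b$, I would pick a uniformly bounded sequence $\phi_m\in{\cal D}_b(A)$ with $\phi_m\to\phi$ pointwise; the deterministic integrals converge by bounded convergence (using Lemma \ref{martindesc}(ii) to integrate $u$ against bounded kernels), and the stochastic integrals converge in $L^2$ through the orthogonal martingale measure extension built in Step (2) of the proof of Lemma \ref{martindesc}, with $\gamma(u)\leq C(1+u)$ and $\sup_n\nu_n(\lambda,q,t)<\infty$ controlling the quadratic variation of the differences. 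The quadratic variation formula then follows as a defining property of the extended $Z$.

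For the pointwise identity, the same derivation applies with $L_b$ replaced by $0$ (only the relation $A P_t^g\phi = -g\,P_t^g\phi$ is used, valid for any bounded $g$). Taking the time-independent test function $\phi_\epsilon^x(y)$ and using the semigroup identity $P_t\phi_\epsilon^x(s,y) = \phi_{t-s+\epsilon}^x(y)$, the variant formula reads
\begin{align*}
\int \phi_\epsilon^x(y)u(t,y)\,dy &= P_{t+\epsilon}u_0(x) + \int_0^t\int b(u^n(s,y))\phi_{t-s+\epsilon}^x(y)u(s,y)\,dy\,ds\\
&\quad + \int_0^t\int \phi_{t-s+\epsilon}^x(y)\,dZ(s,y).
\end{align*}
As $\epsilon\searrow 0$, the left-hand side tends to $u(t,x)$ by continuity of $u(t,\cdot)$, and the deterministic right-hand terms converge by heat-semigroup continuity and dominated convergence.

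The main obstacle is the $L^2$-convergence of the stochastic integral as $\epsilon\searrow 0$, since the heat-kernel weight $\phi_{t-s}^x(y)$ is singular at $s=t$. I would control this by computing the quadratic variation of the difference $\int_0^t\int(\phi_{t-s+\epsilon}^x - \phi_{t-s}^x)(y)\,dZ(s,y)$, bounding $\gamma(u^n)\leq C(1+u^n)$, and invoking the uniform moment bound $\sup_n\nu_n(\lambda,q,t)<\infty$ from Lemma \ref{martindesc}(ii); this is exactly the BDG-with-localization calculation already carried out in Step (3) of the proof of Lemma \ref{martindesc}, so the justification reduces to reusing that estimate.
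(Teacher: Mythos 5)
Your proposal is correct and is essentially the argument the paper intends (it gives no separate proof of the corollary because all the ingredients are already in place): inserting $\psi=P_t^{L_b}\phi$ with $AP_t^{L_b}\phi=-L_bP_t^{L_b}\phi$ into the martingale problem, extending to $\phi\in{\cal E}_b$ via the orthogonal martingale measure built in Step (2) of the proof of Lemma \ref{martindesc}, and obtaining the pointwise formula through the $\varepsilon\searrow 0$ limit of the heat-kernel stochastic integral as in Step (3). One small caution: in that singular limit it is the localization by $T(\ell)$ (bounding $(u^n+1)u$ on $\{s\le T(\ell)\}$ and using $\int_0^t\int(\phi_{t-s+\varepsilon}-\phi_{t-s})^2\,dy\,ds\to 0$), not the integrated moment bounds $\nu_n$, that makes the $L^2$ estimate close, so keep the cutoff exactly as the paper does and remove it afterwards via $T(\ell)\uparrow\infty$.
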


The following two lemmas will be used to prove the tightness of $\mathbb{P}^n(u\in \cdot)$. 
\begin{lem}
Let $p_t(x)=\frac{1}{\sqrt{2\pi t}}\exp\left(-\frac{x^2}{2t}\right)$ and $p_t(x)=0$ if $t<0$.
If $T,\lambda>0$, there exists a $C(T,\lambda)<\infty$ such that \begin{align*}
&\int_0^t \int (p_{t-s}(y-x)-p_{t'-s}(y-x'))^2e^{-\lambda |y|}dyds\\
&\hspace{1em}\leq C(T,\lambda)(|x-x'|+|t-t'|^{1/2})e^{-\lambda |x|},\ \ \text{for }0< t< t'\leq T,|x-x'|\leq 1,\lambda>0.
\end{align*}
\end{lem}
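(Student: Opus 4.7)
The plan is to use the triangle inequality
\[p_{t-s}(y-x) - p_{t'-s}(y-x') = \bigl[p_{t-s}(y-x) - p_{t-s}(y-x')\bigr] + \bigl[p_{t-s}(y-x') - p_{t'-s}(y-x')\bigr]\]
together with $(a+b)^2 \leq 2a^2 + 2b^2$ to split the double integral into a pure spatial increment $I_S$ and a pure temporal increment $I_T$. To extract the prefactor $e^{-\lambda|x|}$, I use $|y| \geq |x| - |y-x|$, which gives $e^{-\lambda|y|} \leq e^{-\lambda|x|}e^{\lambda|y-x|}$; in $I_T$ I further use $|x-x'| \leq 1$ to replace $e^{\lambda|y-x|}$ by $e^{\lambda}e^{\lambda|y-x'|}$, so that in each piece the weight is centred at the same point as the Gaussians.

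I then bound each inner integral via the Mean Value Theorem combined with Cauchy--Schwarz applied to derivatives: writing $\delta = x'-x$ and $0 < u \leq v$,
\[(p_u(w) - p_u(w-\delta))^2 \leq |\delta|\int_0^{|\delta|}\bigl(p_u'(w-r)\bigr)^2\,dr, \qquad (p_u(w) - p_v(w))^2 \leq (v-u)\int_u^v\bigl(\partial_s p_s(w)\bigr)^2\,ds.\]
Using $p_u'(w) = -(w/u)p_u(w)$ and $\partial_s p_s(w) = \tfrac{1}{2}p_s(w)(w^2/s^2 - 1/s)$, explicit Gaussian moment computations (where the weight $e^{\lambda|w|}$ contributes only a bounded factor $e^{T\lambda^2/4}$ after completing the square) give, for $u,s \leq T$,
\[\int (p_u'(w))^2 e^{\lambda|w|}\,dw \leq C(T,\lambda)\,u^{-3/2}, \qquad \int (\partial_s p_s(w))^2 e^{\lambda|w|}\,dw \leq C(T,\lambda)\,s^{-5/2}.\]
Combined with the trivial bound $\int(p_u - p_v)^2 e^{\lambda|w|}\,dw \leq C(T,\lambda) u^{-1/2}$ (from $(a-b)^2 \leq 2(a^2+b^2)$), one obtains the refined estimates $C(T,\lambda)\min(u^{-1/2},|\delta|^2 u^{-3/2})$ for the spatial case and $C(T,\lambda)\min(u^{-1/2},(v-u)^2 u^{-5/2})$ for the temporal case.

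To conclude, I substitute $u = t-s \in [0,t]$ (with $v = u + (t'-t)$ in the temporal case) and split the $s$-integral at the point where the two bounds coincide. For $I_S$, splitting at $u = |x-x'|^2$ gives
\[\int_0^{|x-x'|^2}u^{-1/2}\,du + \int_{|x-x'|^2}^T |x-x'|^2 u^{-3/2}\,du \leq C|x-x'|;\]
for $I_T$, splitting at $u = |t'-t|$ gives
\[\int_0^{|t'-t|}u^{-1/2}\,du + \int_{|t'-t|}^T|t'-t|^2 u^{-5/2}\,du \leq C|t'-t|^{1/2}.\]
The main obstacle is the temporal piece: to get the $|t'-t|^{1/2}$ rate rather than the trivial $O(T^{1/2})$, one needs the sharp $L^2$-bound on $\partial_s p_s$ of order $s^{-5/2}$ (two powers worse than $p_s$ itself) together with the correct split point at $u = |t'-t|$; the spatial piece is somewhat easier since the analogous derivative estimate only loses one power of $u$.
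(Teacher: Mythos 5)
Your proof is correct. Note that the paper itself does not prove this lemma: it is taken from Mueller and Perkins, and the text simply refers the reader to \cite{MuePer} (Lemma 4.3) for the argument, so your write-up supplies the details the paper omits, and it does so along the standard lines of that reference: split the increment into a spatial and a temporal part, pull out the weight via $e^{-\lambda|y|}\le e^{-\lambda|x|}e^{\lambda|y-x|}$ (using $|x-x'|\le 1$ to recentre the weight at $x'$ in the temporal term), and for each inner integral interpolate between the crude bound and the derivative bound, splitting the time integral at the crossover point. I checked the key estimates: from $p_u'(w)=-(w/u)p_u(w)$, $\partial_s p_s(w)=\tfrac12 p_s(w)(w^2/s^2-1/s)$ and $p_u(w)^2=(2\sqrt{\pi u})^{-1}p_{u/2}(w)$ one indeed gets $\int (p_u'(w))^2e^{\lambda|w|}dw\le C(T,\lambda)u^{-3/2}$ and $\int(\partial_s p_s(w))^2e^{\lambda|w|}dw\le C(T,\lambda)s^{-5/2}$, the Cauchy--Schwarz step $(p_u(w)-p_u(w-\delta))^2\le|\delta|\int_0^{|\delta|}(p_u'(w-r))^2dr$ (and its temporal analogue) is sound, and the splits at $u=|x-x'|^2$ and $u=|t'-t|$ give exactly $C|x-x'|$ and $C|t'-t|^{1/2}$, with the min taking care of the blow-up of the refined bounds as $u\to0$. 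Two minor points to spell out in a final version: in the spatial derivative estimate the weight $e^{\lambda|w|}$ sits at $w$ while the derivative is evaluated at $w-r$ with $0\le r\le|\delta|\le1$, so one should insert $e^{\lambda|w|}\le e^{\lambda}e^{\lambda|w-r|}$ (only the constant changes); and the crude bound $\int(p_u(w)-p_u(w-\delta))^2e^{\lambda|w|}dw\le C(T,\lambda)u^{-1/2}$, which you use implicitly in the spatial min, deserves the same one-line justification as its temporal counterpart.
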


\begin{lem}
Let $\{X_n(t,\cdot):t\geq 0,n\in\mathbb{N}\}$ be a sequence of continuous $C_{rap}(\mathbb{R})_+$-valued processes. Suppose that there exist some $\alpha>0$, $\beta>2$ and for all $T,\lambda>0$, there exists $C(T,\lambda)>0$ such that \begin{align*}
\mathbb{P}\left(	\left|	X_n(t,x)-X_n(t',x')	\right|^\alpha	\right)\leq C\left(		|x-x'|^\beta+|t-t'|^\beta		\right)e^{-\lambda|x|},\\
\ \ \text{for all }t,t'\in [0,T],\ |x-x'|\leq 1,n\in\mathbb{N}.
\end{align*}
If $\{\mathbb{P}_{X_n(0)}\}$ is tight on $C_{rap}(\mathbb{R})_+$, then $\{\mathbb{P}\left(X_n:n\in \mathbb{N}		\right)\}$ is tight on $C([0,\infty),C_{rap}(\mathbb{R})_+)$.

\end{lem}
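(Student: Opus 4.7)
My plan is to establish tightness of $\{\mathbb{P}(X_n\in\cdot)\}$ on $C([0,T],C_{rap}(\mathbb{R})_+)$ for each fixed $T>0$ and then pass to $C([0,\infty),C_{rap}(\mathbb{R})_+)$ via a standard diagonal argument in $T\in\mathbb{N}^{*}$. Since the topology of $C_{rap}(\mathbb{R})$ is generated by the countable family of norms $|\cdot|_\lambda$, it suffices to prove, for every $\lambda>0$ and $T>0$, that the laws of $X_n|_{[0,T]}$ are tight in $C([0,T],\mathcal{C}_\lambda)$, where $\mathcal{C}_\lambda=\{f\in C(\mathbb{R}):|f|_\lambda<\infty\}$ is a separable Banach space. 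By the Prokhorov--Arzel\`a--Ascoli criterion this reduces to (i) tightness of the initial laws $\{X_n(0)\}$ in $\mathcal{C}_\lambda$, already supplied by the hypothesis, and (ii) a uniform-in-$n$ modulus-of-continuity bound
\begin{align*}
\lim_{\delta\to 0}\sup_n\mathbb{P}\Bigl(\sup_{\substack{|s-t|<\delta\\ s,t\in[0,T]}}|X_n(s)-X_n(t)|_\lambda>\varepsilon\Bigr)=0
\end{align*}
for every $\varepsilon>0$.

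The main device is Kolmogorov's continuity theorem applied to $X_n$ on each box $B_k=[0,T]\times[k,k+1]$ with $k\in\mathbb{Z}$. On $B_k$ one has $e^{-\lambda|x|}\leq e^{\lambda}e^{-\lambda|k|}$, so the hypothesis yields
\begin{align*}
\mathbb{E}|X_n(t,x)-X_n(t',x')|^\alpha\leq C(T,\lambda)\,e^{-\lambda|k|}\bigl(|t-t'|+|x-x'|\bigr)^\beta
\end{align*}
for all $(t,x),(t',x')\in B_k$. Since $\beta>2$ matches the two-dimensional parameter space, Kolmogorov produces, for any fixed $\gamma\in(0,(\beta-2)/\alpha)$, a random H\"older constant $K_n^{(k)}$ such that $|X_n(t,x)-X_n(t',x')|\leq K_n^{(k)}(|t-t'|+|x-x'|)^\gamma$ on $B_k$ and $\mathbb{E}(K_n^{(k)})^\alpha\leq C'(T,\lambda,\gamma)\,e^{-\lambda|k|}$.

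Condition (ii) then follows from the pointwise inequality
\begin{align*}
\sup_{|s-t|<\delta}|X_n(s)-X_n(t)|_{\lambda'}\leq e^{\lambda'}\delta^\gamma\sup_{k\in\mathbb{Z}}e^{\lambda'|k|}K_n^{(k)},
\end{align*}
once one picks the weight in the hypothesis to satisfy $\lambda>\alpha\lambda'$: a Markov bound together with the summability $\sum_{k}e^{\alpha\lambda'|k|-\lambda|k|}<\infty$ shows that $\sup_k e^{\lambda'|k|}K_n^{(k)}$ is tight in $n$, so the right-hand side vanishes in probability uniformly in $n$ as $\delta\to 0$. Since $\lambda'>0$ is arbitrary, (ii) holds for every $\lambda'$, and combined with (i) this gives tightness in $C([0,T],\mathcal{C}_{\lambda'})$ for every $\lambda'>0$, hence in $C([0,T],C_{rap}(\mathbb{R})_+)$; the diagonal argument then delivers tightness on $C([0,\infty),C_{rap}(\mathbb{R})_+)$. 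The only delicate piece of bookkeeping is the coupling of the exponents: one must apply the hypothesis separately for each $\lambda'$ with a sufficiently larger $\lambda=\lambda(\alpha,\lambda')$ to force convergence of the sum over $k$, which is legitimate precisely because the assumption is assumed for every $\lambda>0$.
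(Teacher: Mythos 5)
The paper never proves this lemma itself --- it is quoted from Mueller and Perkins (their Lemmas 4.3 and 4.4) --- so your attempt is to be measured against that standard Kolmogorov-type argument, and your overall strategy is the right one: apply Kolmogorov's continuity theorem on the boxes $[0,T]\times[k,k+1]$, let the H\"older constants $K_n^{(k)}$ inherit the weight via $\mathbb{E}\bigl[(K_n^{(k)})^\alpha\bigr]\le C'e^{-\lambda|k|}$, and sum over $k$ using a gap $\lambda>\alpha\lambda'$; the Markov/union bound giving uniform-in-$n$ control of $\sup_k e^{\lambda'|k|}K_n^{(k)}$ and the resulting time-modulus estimate in $|\cdot|_{\lambda'}$ are all correct.

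There is, however, a genuine gap in the reduction. For an infinite-dimensional state space, tightness of the initial marginals plus a uniform modulus of continuity in time in the norm $|\cdot|_{\lambda'}$ does \emph{not} imply tightness in $C([0,T],\mathcal{C}_{\lambda'})$: the Arzel\`a--Ascoli/Prokhorov criterion you invoke requires, in addition to your (ii), relative compactness (tightness) of the fixed-time marginals $\{X_n(t)\}$ for $t$ in a dense subset of $[0,T]$, not only at $t=0$. Your (i)+(ii) only place $X_n(t)$ in a bounded neighbourhood of a compact set, and closed bounded subsets of the weighted sup-norm space $\mathcal{C}_{\lambda'}$ (which, incidentally, is isometric to $C_b(\mathbb{R})$ and not separable, so one should really work in the separable subspace where $e^{\lambda'|x|}f(x)$ vanishes at infinity) are not compact, so the criterion as stated fails. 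The missing step --- compact containment at fixed times $t>0$ --- is obtainable from estimates you already have: the same constants $K_n^{(k)}$ control the weighted spatial modulus $e^{\lambda'|x|}|X_n(t,x)-X_n(t,x')|$ on (adjacent) boxes, and a family of functions bounded in $|\cdot|_{\lambda'}$ and equicontinuous on compacts is relatively compact in $\mathcal{C}_{\lambda''}$ for any $\lambda''<\lambda'$, by Arzel\`a--Ascoli on compacts together with the tail bound $e^{\lambda''|x|}|f(x)|\le e^{-(\lambda'-\lambda'')|x|}|f|_{\lambda'}$. So the argument must be run with a strict gap in the weights --- derive boundedness and oscillation control in $|\cdot|_{\lambda'}$, conclude compactness only in $|\cdot|_{\lambda''}$, and then intersect over a countable family of weights --- exactly as in the Mueller--Perkins proof the paper cites; as written, your proof of tightness in $C([0,T],\mathcal{C}_{\lambda'})$ is incomplete at this point.
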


The reader can refer the proof of these lemmas to \cite[Lemma 4.3 and Lemma 4.4]{MuePer}. 

\vspace{1em}

Now, we will show the tightness of $\mathbb{P}^n(u\in \cdot)$.

\begin{prop}\label{tightness}
$\{\mathbb{P}^n(u\in \cdot):n\in\mathbb{N}\}$ is tight on $C([0,\infty),C_{rap}^+)$.
\end{prop}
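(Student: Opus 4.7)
The plan is to apply the Kolmogorov-type tightness criterion stated in the second auxiliary lemma above. Since $u_0\in C_{rap}(\mathbb{R})_+$ is deterministic, the initial laws are trivially tight, so it suffices to produce exponents $\alpha>0$ and $\beta>2$ such that for every $T,\lambda>0$,
\begin{equation*}
\mathbb{P}^n\bigl(|u(t,x)-u(t',x')|^\alpha\bigr)\leq C(T,\lambda)\bigl(|x-x'|^\beta+|t-t'|^\beta\bigr)e^{-\lambda|x|}
\end{equation*}
uniformly in $n\in\mathbb{N}$, $t,t'\in[0,T]$, and $|x-x'|\leq 1$.

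The starting point is the mild representation of Corollary \ref{cor1}:
\begin{equation*}
u(t,x)=P_tu_0(x)+\int_0^t\int b(u^n(s,y))\phi_{t-s}^x(y)u(s,y)dyds+\int_0^t\int\phi_{t-s}^x(y)dZ(s,y).
\end{equation*}
I would split $u(t,x)-u(t',x')$ into the three corresponding differences and bound them separately. The deterministic heat piece $P_tu_0(x)-P_{t'}u_0(x')$ is handled by standard Gaussian estimates, using $u_0\in C_{rap}$ to absorb the factor $e^{-\lambda|x|}$. For the stochastic integral I apply the Burkholder-Davis-Gundy inequality at exponent $2m$ (with $m$ to be chosen large) to bound its $2m$-th moment by a constant times
\begin{equation*}
\mathbb{P}^n\left(\left(\int_0^{t'}\int\bigl(\phi_{t-s}^x(y)-\phi_{t'-s}^{x'}(y)\bigr)^2\gamma(u^n(s,y))u(s,y)dyds\right)^m\right).
\end{equation*}
Then $\gamma(u)\leq C(1+u)$ (valid because $r\in(1/2,1]$), Hölder's inequality applied against the $L^2$ heat-kernel difference estimate of the first auxiliary lemma, and the uniform bounds $\sup_n\nu_n(\lambda',q,T)<\infty$ from Lemma \ref{martindesc}(ii) combine to give an upper bound of the form $C(|x-x'|+|t-t'|^{1/2})^me^{-\lambda|x|}$.

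The drift piece is treated analogously: the growth condition (\ref{b}) gives $|b(u)|u\leq (L_b+l_b)u+l_bu^{\theta+1}$, so after taking the $\alpha$-th power, applying Hölder to peel off an $L^2$ norm of the heat-kernel difference, and invoking the moment bound of Lemma \ref{martindesc}(ii) with $q$ as large as needed, one obtains an estimate of the same form. The main obstacle is the bookkeeping: one must arrange the BDG/Hölder exponents so that the final power of $|x-x'|+|t-t'|^{1/2}$ exceeds $2$, which forces $m$ (equivalently $\alpha=2m$) to be taken large. This is permitted precisely because Lemma \ref{martindesc}(ii) supplies uniform moment bounds for every $q>0$. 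Once the Kolmogorov criterion is verified, the second auxiliary lemma yields tightness of $\{\mathbb{P}^n(u\in\cdot)\}$ on $C([0,\infty),C_{rap}^+)$.
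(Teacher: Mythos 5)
Your overall strategy coincides with the paper's: verify the Kolmogorov-type criterion of the second auxiliary lemma by starting from the mild representation of Corollary \ref{cor1}, splitting the increment into a deterministic part, a drift part and a stochastic-integral part, and controlling the latter two with Burkholder--Davis--Gundy, H\"older's inequality against the $L^2$ heat-kernel difference estimate of the first auxiliary lemma, the growth bounds on $b$ and $\gamma$, and the uniform moment bounds $\sup_n\nu_n(\lambda,q,T)<\infty$ of Lemma \ref{martindesc}(ii), taking the moment exponent large to push $\beta$ above $2$. (Whether one uses the plain kernel with drift $b(u^n)$, as you do, or the Feynman--Kac kernel $P_t^{L_b}$ with drift $b(u^n)-L_b$, as the paper does, is immaterial.)

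However, there is one step in your argument that fails as written: the treatment of the deterministic term. You claim that $P_tu_0(x)-P_{t'}u_0(x')$ can be bounded "by standard Gaussian estimates" so as to satisfy the criterion, but the criterion requires a bound of the form $C\bigl(|x-x'|^\beta+|t-t'|^\beta\bigr)e^{-\lambda|x|}$ with $\beta>2$, i.e.\ a H\"older-type modulus (after taking the $\alpha$-th power). Since $u_0\in C_{rap}(\mathbb{R})_+$ is merely continuous, $P_tu_0$ has no H\"older modulus in $(t,x)$ near $t=0$ in general (already $|u_0(x)-u_0(x')|$ need not be bounded by any power of $|x-x'|$), so no choice of $\alpha$ rescues this term. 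The paper avoids the problem by setting $\hat{u}(t,x)=u(t,x)-P_t^{L_b}u_0(x)$ and proving the moment estimate for $\hat{u}$ only; since $t\mapsto P_t^{L_b}u_0$ is a single fixed continuous $C_{rap}(\mathbb{R})_+$-valued path, tightness of the laws of $\hat{u}$ implies tightness of the laws of $u$. Your proof goes through once you make the same subtraction (the drift and stochastic estimates are unaffected); as it stands, applying the criterion directly to $u$ is a genuine gap.
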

\begin{proof} Let $\hat{u}(t,x)=u(t,x)-P_{t}^{L_b}u_0(x)$.
Since $t\mapsto P_{t}^{L_b}u_0\in C([0,\infty),C_{rap}^+)$, we will show that $\{\mathbb{P}^n(\hat{u}\in \cdot):n\in\mathbb{N}\}$ is tight. Let $q\geq 1$, $0\leq t\leq t'\leq T$ and $|x-x'|\leq 1$ and $p_{t}(x)=\frac{1}{\sqrt{2\pi t}}\exp\left(	-\frac{x^2}{2t}		\right)$ $t\geq 0$ and $x\in\mathbb{R}$ and $p_t(x)=0$ for $t<0$. Then, by Corollary \ref{cor1}
\begin{align*}
&\mathbb{P}^n\left(		|\hat{u}(t,x)-\hat{u}(t',x')|^{2q}	\right)\\
&\leq c(q)\mathbb{P}^n	\left(	\left(\int_0^t\int		e^{L_b(t-s)}\left(	b(u^n(s,y))-L_b		\right)\left(p_{t-s}(y-x)-p_{t'-s}(y-x')\right)	u(s,y)dyds\right)^{2q}\right)\\
&+c(q)\mathbb{P}^n	\left(	\left(\int_0^{t}\int	\left(	e^{L_b(t'-s)}-e^{L_b(t-s)}\right)\left(	b(u^n(s,y))-L_b		\right)p_{t'-s}(y-x')	u(s,y)dyds\right)^{2q}\right)\\
&+c(q)\mathbb{P}^n	\left(	\left(\int_t^{t'}\int				e^{L_b(t'-s)}\left(	b(u^n(s,y))-L_b		\right)p_{t'-s}(y-x')	u(s,y)dyds\right)^{2q}\right)\\
&+	c(q)\mathbb{P}^n\left(\left(\int_0^t\int e^{2L_b(t-s)}\gamma(u^n(s,y))\left(p_{t-s}(y-x)-p_{t'-s}(y-x')\right)^2	u(s,y)dyds	\right)^q	\right)\\
&+	c(q)\mathbb{P}^n\left(\left(\int_0^t\int \left(e^{L_b(t'-s)}-e^{L_b(t-s)}\right)^2\gamma(u^n(s,y))\left(p_{t'-s}(y-x')\right)	^2u(s,y)dyds	\right)^q	\right)\\
&+	c(q)\mathbb{P}^n\left(\left(\int_t^{t'}\int e^{2L_b(t'-s)}\gamma(u^n(s,y))\left(p_{t'-s}(y-x')\right)^2	u(s,y)dyds	\right)^q	\right)\\
&\leq c(q,L_b,T,b)\left(	\int_0^t\int \left(\mathbb{P}^n\left(\left(	u^n(s,y)^\theta+1	\right)u(s,y)\right)^{2q}\right)e^{2q\lambda|y|}\left(p_{t-s}(y-x)-p_{t'-s}(y-x')\right)^2dyds		\right.\\
&\left.\times\left(\int_0^t\int	\left(p_{t-s}(y-x)-p_{t'-s}(y-x')\right)^2e^{-\frac{2q\lambda}{2q-1}|y|}dyds		\right)^{2q-1}\right)\\
&+c(q,L_b,T,b)|t'-t|^{2q}\left(	\int_0^t\int \left(\mathbb{P}^n\left(\left(	u^n(s,y)^\theta+1	\right)u(s,y)\right)^{2q}\right)e^{2q\lambda|y|}\left(p_{t'-s}(y-x')\right)^2dyds		\right.\\
&\left.\times\left(\int_0^t\int	\left(p_{t'-s}(y-x')\right)^2e^{-\frac{2q\lambda}{2q-1}|y|}dyds		\right)^{2q-1}\right)\\						
&+c(q,L_b,T,b)\left(	\int_t^{t'}\int \left(\mathbb{P}^n\left(\left(	u^n(s,y)^\theta+1	\right)u(s,y)\right)^{2q}\right)e^{2q\lambda|y|}\left(p_{t'-s}(y-x')\right)^2dyds		\right.\\
&\left.\times\left(\int_t^{t'}\int	\left(p_{t'-s}(y-x')\right)^2e^{-\frac{2q\lambda}{2q-1}|y|}dyds		\right)^{2q-1}\right)\\						
&+c(q,L_b,T,\gamma)\left(	\int_0^t\int	\mathbb{P}^n\left(\left(	\left(u^n(s,y)+1\right)u(s,y)		\right)^{q}\right)e^{q\lambda|y|}\left(p_{t-s}(y-x)-p_{t'-s}(y-x')\right)^2dyds				\right)\\
&\left. \times \left(		\int_0^t\int	\left(p_{t-s}(y-x)-p_{t'-s}(y-x')\right)^2e^{-\frac{q\lambda}{q-1}|y|}dyds			\right)^{q-1}\right)\\
&+c(q,L_b,T,\gamma)|t'-t|^{2q}\left(	\int_0^t\int	\mathbb{P}^n\left(\left(	\left(u^n(s,y)+1\right)u(s,y)		\right)^{q}\right)e^{q\lambda|y|}\left(p_{t'-s}(y-x')\right)^2dyds				\right)\\
&\left. \times \left(		\int_0^t\int	\left(p_{t'-s}(y-x')\right)^2e^{-\frac{q\lambda}{q-1}|y|}dyds			\right)^{q-1}\right)\\
&+c(q,L_b,T,\gamma)\left(	\int_t^{t'}\int	\mathbb{P}^n\left(\left(	\left(u^n(s,y)+1\right)u(s,y)		\right)^{q}\right)e^{q\lambda|y|}\left( p_{t'-s}(y-x')\right)^2dyds				\right)\\
&\left. \times \left(		\int_0^t\int	\left(p_{t'-s}(y-x')\right)^2e^{-\frac{q\lambda}{q-1}|y|}dyds			\right)^{q-1}\right)\\
&\leq c(q,\lambda,L_b,T,b,\gamma)\left(|x-x'|^{q-1}+|t-t'|^{\frac{q-1}{2}}\right)e^{-\lambda |x|}. 
\end{align*}
Since $u_0\in C_{rap}^+$, the result follows. 
\end{proof}

\begin{proof}[Proof of Theorem \ref{main} for $\frac{1}{2}<r\leq 1$]

For fixed $u_0(\cdot)\in C_{rap}^+(\mathbb{R})$, let $\left\{u_n(s,x):(s,x)\in\mathbb{R}_+\times \mathbb{R}\right\}$ have the law of $\mathbb{P}^n$.
By Lemma \ref{tightness}, there exist subsequences $N_k$ such that $u_{N_k}\Rightarrow u$ in $C([0,\infty),C_{rap}^+(\mathbb{R}))$.
By Skorohod's theorem, we may assume that $\{u_{N_k}:k\in\mathbb{N}\}\cup \{u\}$ can be constructed on the same probability space $(\Omega',{\cal F},\mathbb{P})$ and $u_{N_{k_{\ell}}}$ converges uniformly to $u$ $\mathbb{P}$-a.s.\,for a subsequence  $\{k_{\ell}:\ell\in \mathbb{N}\}$.

Then, we remark by continuity of $b$ and $\gamma$ that for any $T>0$ and $K>0$\begin{align*}
&\sup_{t\leq T}\sup_{|x|\leq K}\left|b(u^n(t,x))-b(u(t,x))\right|\to 0\\ 
&\sup_{t\leq T}\sup_{|x|\leq K}\left|\gamma(u^n(t,x))-\gamma(u(t,x))			\right|\to 0, \text{ as }n\to \infty
\end{align*}
$\mathbb{P}^n$-a.s. 
Lemma \ref{tightness} implies that for each $\phi\in C_{c}^{1,2}(\mathbb{R}_+\times\mathbb{R})$\begin{align}
&Z_t(\phi)=\int \phi(t,x)u(t,x)dx-\int \phi(0,x)u_0(x)dx-\int_0^t\int \left(A \phi(s,x)+b(u(s,x))\phi(s,x)\right)u(s,x)dxds,\notag\\
&Z_t(\phi)^2-\int_0^t \int \phi^2(s,x)\gamma(u(s,x))u(s,x)dxds\notag
\end{align}
are ${\cal F}_t^u$-martingales under $\mathbb{P}$. Then, we can extend $Z$ to an orthogonal martingale measure $\{Z(\phi):\phi\in{\cal E}_b\}$ by the same argument in the proof of Lemma \ref{martindesc}.

Thus, $u_t$ is a solution of the martingale problem (\ref{martiag}).


\end{proof}

In the end of this paper, we will give the proof of Theorem \ref{main} for $0<r\leq \frac{1}{2}$.

\begin{proof}[Proof of Theorem \ref{main} for $0<r\leq \frac{1}{2}$] Let $\sigma_n(u)={\sigma(u)}\left(\frac{u}{u+{1}/{n}}\right)^{1/2}$. Then, it is clear that $\sigma_n(u)$ satisfies that \begin{align*}
0\leq \sigma_n(u)\leq L'_\sigma (u^{r+1/2}+u)
\intertext{and}
\sup_n\sigma_n(u)=\sigma(u).
\end{align*}
Thus, for each $n$, there exists a $C_{rap}(\mathbb{R})_+$-valued  solution to the martingale problem (\ref{martiag}) for $b$ and $\sigma_n$. We denote it by $u_n$.
Then, the same results as the above Lemmas and Propositions are true for $u_n$ so that $\left\{\mathbb{P}(u_n\in \cdot):n\in\mathbb{N}\right\}$ is tight on $C([0,\infty),C_{rap}^+)$. Also, the same argument as the proof for $1/2<r\leq 1$ does hold and we complete the proof.
\end{proof}


\end{document}